\newtheorem{thm}{Theorem}[section]
\newtheorem{prop}[thm]{Proposition}
\newtheorem{rem}[thm]{Remark}
\theoremstyle{definition}
\numberwithin{equation}{section}
\newcommand{\C}{\mathbb{C}}
\newcommand{\R}{\mathbb{R}}
\newcommand{\supp}{\operatorname{supp}}
\def \bfo {\begin {eqnarray*} }
\def \efo {\end {eqnarray*} }
\def \ba {\begin {eqnarray*} }
\def \ea {\end {eqnarray*} }
\def \beq {\begin {eqnarray}}
\def \eeq {\end {eqnarray}}
\def \supp {\hbox{supp }}
\def \p {\partial}
\def \bfo {\begin {eqnarray*} }
\def \efo {\end {eqnarray*} }
\def \ba {\begin {eqnarray*} }
\def \ea {\end {eqnarray*} }
\def \beq {\begin {eqnarray}}
\def \eeq {\end {eqnarray}}
\def \supp {\hbox{supp }}
\def \p {\partial}
\begin{document}

\title[Nonlinear magnetic Schr\"odinger equations on complex manifolds]{A remark on inverse problems for nonlinear magnetic Schr\"odinger equations on complex manifolds}

\author[Krupchyk]{Katya Krupchyk}
\address
        {K. Krupchyk, Department of Mathematics\\
University of California, Irvine\\
CA 92697-3875, USA }

\email{katya.krupchyk@uci.edu}

\author[Uhlmann]{Gunther Uhlmann}

\address
       {G. Uhlmann, Department of Mathematics\\
       University of Washington\\
       Seattle, WA  98195-4350\\
       USA\\
        and Institute for Advanced Study of the Hong Kong University of Science and Technology}
\email{gunther@math.washington.edu}

\author[Yan]{Lili Yan}

\address
        {L. Yan, Department of Mathematics\\
         University of California, Irvine\\ 
         CA 92697-3875, USA }

\email{liliy6@uci.edu}

\maketitle
\begin{abstract}
We show that the knowledge of the Dirichlet--to--Neumann map for a nonlinear magnetic Schr\"odinger operator  on the boundary of a compact complex manifold, equipped with a  K\"ahler metric and admitting sufficiently many global holomorphic functions, determines the nonlinear magnetic and electric potentials uniquely.

\end{abstract}

\section{Introduction}

Let $M$ be an $n$--dimensional compact complex manifold with $C^\infty$ boundary, equipped with a K\"ahler metric $g$. Consider the nonlinear magnetic Schr\"odinger operator 
\[
L_{A,V}u=d^*_{\overline{A(\cdot, u)}}d_{A(\cdot, u)}u +V(\cdot, u),
\]
acting on $u\in C^\infty(M)$. Here the nonlinear magnetic $A: M\times \C\to T^*M\otimes\C$  and electric $V:M\times \C\to \C$ potentials are assumed to satisfy the following conditions:  
\begin{itemize}
\item[(i)] the map $\C\ni w\mapsto A(\cdot, w)$ is holomorphic with values in $C^{\infty}(M,T^*M\otimes\C)$, 
\item[(ii)] $A(z,0)=0$ for all $z\in M$, 
\item[(iii)] the map $\C\ni w\mapsto  V(\cdot, w)$ is holomorphic with values in $C^\infty(M)$, 
\item[(iv)] $V(z,0)=\p_w V(z,0)=0$ for all $z\in M$. 
 \end{itemize}
Thus, $A$ and $V$ can be expanded into power series
\begin{equation}
\label{eq_int_1_1}
A(z,w)=\sum_{k=1}^\infty A_k(z) \frac{w^k}{k!}, \quad V(z,w)=\sum_{k=2}^\infty V_k(z)\frac{w^k}{k!},
\end{equation}
converging in $C^{\infty}(M,T^*M\otimes\C)$ and $C^{\infty}(M)$ topologies, respectively. Here 
\[
A_k(z):=\p_w^k A(z,0)\in C^{\infty}(M,T^*M\otimes\C),\quad V_k(z):=\p_w^kV(z,0)\in C^\infty(M). 
\]
We write $T^*M\otimes \C$ for the complexified cotangent bundle of $M$, 
\begin{equation}
\label{eq_int_1_1_2}
d_{A(\cdot,w)}=d+iA(\cdot,w): C^\infty(M)\to C^\infty(M, T^*M\otimes \C),\quad w\in \C,
\end{equation}
where $d: C^\infty(M)\to C^\infty(M, T^*M\otimes \C)$ is the de Rham differential, and $d_{A(\cdot,w)}^*: C^\infty(M, T^*M\otimes \C)\to C^\infty(M)$  is the formal $L^2$--adjoint of $d_{A(\cdot,w)}$ taken with respect to the K\"ahler metric $g$. 

It is established in \cite[Appendix B]{KU20} that under the assumptions (i)-(iv), there exist $\delta>0$ and $C>0$ such that for any $f\in B_\delta(\p M):=\{f\in C^{2,\alpha}(\p M): \|f\|_{C^{2,\alpha}(\p M)}<\delta\}$, $0<\alpha<1$, the Dirichlet problem for the nonlinear magnetic Schr\"odinger operator
\begin{equation}
\label{eq_int_1_2}
\begin{cases}
L_{A,V}u=0 & \text{in}\quad M^{\text{int}},\\
u|_{\p M}=f,
\end{cases}
\end{equation}
has a unique solution $u=u_f\in C^{2,\alpha}(M)$ satisfying $\|u\|_{C^{2,\alpha}(M)}<C\delta$. Here $C^{2,\alpha}(M)$ and $C^{2,\alpha}(\p M)$ stand for the standard H\"older spaces of functions on $M$ and $\p M$, respectively, and $M^{\text{int}}=M\setminus \p M$ stands for  the interior of $M$.  Associated to \eqref{eq_int_1_2}, we introduce the Dirichlet--to--Neumann map 
\begin{equation}
\label{eq_int_1_3}
\Lambda_{A,V}f=\p_\nu u_f|_{\p M}, \quad f\in B_\delta(\p M),
\end{equation}
where $\nu$ is the unit outer normal to the boundary of $M$. 

The inverse boundary problem for  the nonlinear magnetic Schr\"odinger operator  that we are interested in asks whether the knowledge of the Dirichlet--to--Neumann map $\Lambda_{A,V}$ determines the nonlinear magnetic $A$ and electric $V$ potentials in $M$. Such inverse problems have been recently studied in \cite{KU20} in the case of conformally transversally anisotropic manifolds and in \cite{Lai_Ting} and \cite{Ma} in the case of partial data in the Euclidean space and on Riemann surfaces, respectively.  

To state our result, following \cite{Guillarmou_Salo_Tzou_2019}, we assume that the manifold $M$ satisfies the following additional assumptions: 
\begin{itemize}
\item[(a)]  $M$ is holomorphically separable in the sense that if $x,y\in M$ with $x\ne y$, there is some $f\in \mathcal{O}(M):=\{f\in C^\infty(M): f \text{ is holomorphic in } M^{\text{int}}\}$ such that $f(x)\ne f(y)$, 
\item[(b)] $M$ has local charts given by global holomorphic functions in the sense that for every $p\in M$ there exist $f_1,\dots, f_n\in \mathcal{O}(M)$ which form a complex coordinate system near $p$. 
\end{itemize}
As explained in \cite{Guillarmou_Salo_Tzou_2019}, examples of complex manifolds satisfying all of the assumptions above including  (a) and (b) are as follows:
\begin{itemize}
\item any compact $C^\infty$ subdomain of a Stein manifold, equipped  with a K\"ahler metric,
\item any compact $C^\infty$ subdomain of a complex submanifold of $\C^N$, equipped with a K\"ahler metric, 
\item any compact $C^\infty$ subdomain of a complex coordinate neighborhood on a K\"ahler manifold. 
\end{itemize}

The main result of this note is as follows. 
\begin{thm}
\label{thm_main}
Let $M$ be an $n$--dimensional compact complex manifold with $C^\infty$ boundary, equipped with a K\"ahler metric $g$, satisfying assumptions (a) and (b). Let $A^{(1)}, A^{(2)}: M\times \C\to T^*M\otimes\C$  and $V^{(1)}, V^{(2)}:M\times \C\to \C$ be such that the assumptions \emph{(i)}--\emph{(iv)} hold. If $\Lambda_{A^{(1)}, V^{(1)}}=\Lambda_{A^{(2)}, V^{(2)}}$  then $A^{(1)}=A^{(2)}$ and 
$V^{(1)}=V^{(2)}$ in $M\times\C$. 
\end{thm}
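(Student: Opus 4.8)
The plan is to use the standard higher-order linearization scheme for inverse problems for nonlinear equations, combined with the complex-geometry machinery of \cite{Guillarmou_Salo_Tzou_2019} that produces holomorphic and antiholomorphic CGO-type solutions on $M$ from assumptions (a) and (b). First I would linearize the Dirichlet-to-Neumann map at the zero solution. Writing $f=\sum_{j=1}^N \varepsilon_j f_j$ with small parameters $\varepsilon_j$, the solution $u_f$ depends holomorphically (in fact real-analytically) on $(\varepsilon_1,\dots,\varepsilon_N)$, so one may differentiate the equation $L_{A,V}u=0$ and the identity $\Lambda_{A^{(1)},V^{(1)}}=\Lambda_{A^{(2)},V^{(2)}}$ in the $\varepsilon_j$'s and evaluate at $\varepsilon=0$. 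Because $A(z,0)=0$ and $V(z,0)=\partial_wV(z,0)=0$, the first linearization is just the Laplace--Beltrami operator $d^*d=\Delta_g$, so all first-order linearized solutions $v_j$ are harmonic with prescribed boundary data $f_j$, and these have equal Neumann data for the two potentials. This is the base case of an induction on the order $k$ of linearization.

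Next, the key algebraic step: carrying out the $k$-th order linearization of $L_{A,V}u=0$ produces, at leading order, a term of the form $\Delta_g w_k = (\text{lower-order linearizations})$, where the first place the coefficients $A_k$ and $V_k$ appear is in an integral identity obtained by pairing with a harmonic function $v_0$ and integrating by parts (Alessandrini/Green-type identity). After subtracting the two potentials and using the inductive hypothesis that $A^{(1)}_j=A^{(2)}_j$, $V^{(1)}_j=V^{(2)}_j$ for $j<k$, one arrives at an identity of the schematic form
\[
\int_M \Big( (A^{(1)}_k-A^{(2)}_k)\cdot(\text{terms built from } dv_1,\dots) + (V^{(1)}_k-V^{(2)}_k)\, v_1\cdots v_k\, v_0 \Big)\, dV_g = 0,
\]
valid for all choices of harmonic functions $v_0,v_1,\dots,v_k$ on $M$. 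The magnetic contribution enters through expressions of the form $\int_M (A^{(1)}_k-A^{(2)}_k)\cdot (v_1 \, \overline{\partial} v_2 + \dots)$; as in the linear magnetic Schr\"odinger inverse problem, there is a gauge-type ambiguity in the magnetic term that must be handled by choosing $v_2$ holomorphic so that $d v_2$ is of type $(1,0)$, isolating the $(1,0)$-part of $A_k$, and then using antiholomorphic solutions to capture the $(0,1)$-part; since $M$ is a complex manifold and the underlying first linearization operator is $\Delta_g=d^*d$ with no genuine magnetic term at this stage, I expect no residual gauge obstruction and one recovers $A^{(1)}_k=A^{(2)}_k$ outright, and then $V^{(1)}_k=V^{(2)}_k$ from the remaining scalar identity.

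The density argument is where assumptions (a) and (b) do the work. By \cite{Guillarmou_Salo_Tzou_2019}, holomorphic separability and the existence of global holomorphic coordinate charts yield enough global holomorphic functions $f\in\mathcal O(M)$ that products of (anti)holomorphic harmonic functions are dense in a suitable sense; concretely, one takes $v_1=e^{\tau\Phi}$-type solutions where $\Phi$ is a global holomorphic Morse function, localizes near a point $p\in M$ using the holomorphic coordinate system $f_1,\dots,f_n$ from (b), and performs a stationary-phase / Laplace-type asymptotic analysis in $\tau\to\infty$ to extract the value of $(A^{(1)}_k-A^{(2)}_k)(p)$ and $(V^{(1)}_k-V^{(2)}_k)(p)$ pointwise. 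The main obstacle, and the part requiring the most care, is precisely this step: arranging the correct combination of holomorphic and antiholomorphic solutions so that the magnetic term — which is first-order in the test functions — does not degenerate, and ensuring the stationary-phase expansion genuinely sees $A_k$ rather than only $dA_k$ or a gauge-equivalent object. Once $A^{(1)}_k=A^{(2)}_k$ and $V^{(1)}_k=V^{(2)}_k$ are established for all $k$, the power series expansions \eqref{eq_int_1_1} give $A^{(1)}=A^{(2)}$ and $V^{(1)}=V^{(2)}$ on $M\times\C$, completing the induction and the proof.
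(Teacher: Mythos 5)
Your overall framework (higher order linearization at zero boundary data, induction on the order, reduction to integral identities against harmonic functions, and reliance on assumptions (a), (b) through \cite{Guillarmou_Salo_Tzou_2019}) matches the paper. But the proposal has a genuine gap exactly at the point you yourself flag as "the main obstacle": you never actually show how the first-order magnetic term is handled, writing only that you "expect no residual gauge obstruction" and deferring to an unperformed stationary-phase/CGO analysis with holomorphic Morse functions. The density theorem of \cite{Guillarmou_Salo_Tzou_2019} is a statement about zeroth-order quantities (it lets you conclude $q=0$ from $\int_M q\, v^{(1)}v^{(2)}\,dV_g=0$ for all harmonic $v^{(1)},v^{(2)}$); the identity you derive at order $m$ contains the term $\int_M \langle A, d(v^{(1)}\cdots v^{(m)})\rangle_g\, v^{(m+1)}\,dV_g$, which is first order in the test functions and is not covered by that result, so citing it does not finish the argument, and re-proving a CGO/stationary-phase analysis adapted to a first-order perturbation is precisely the nontrivial work your sketch leaves undone.

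The paper closes this gap with a short, purely algebraic chain that you are missing. First, symmetrize the integral identity in two of the harmonic functions, subtract, and set one of them equal to $1$; this isolates $\int_M \langle A, dv^{(1)}\rangle_g\, v^{(2)}\,dV_g=0$, to which a boundary determination result (Proposition \ref{prop_boundary_determination}, proved with oscillating/concentrating boundary solutions in the style of \cite{Brown_Salo_2006}) applies and gives $A|_{\partial M}=0$. Your proposal contains no boundary determination step at all, yet it is needed twice: to kill the boundary term in Stokes' formula and, together with \cite[Proposition 3.1]{Ma_Tzou_2021}, to apply the interior density result. Second, with $A|_{\partial M}=0$ one integrates by parts to rewrite the identity as $\int_M h\, v^{(1)}v^{(2)}\,dV_g=0$ with $h=3i\,d^*(Av^{(3)})-(2i\,d^*A+V)v^{(3)}$, which \emph{is} of the form treated by \cite[Theorem 1.1]{Guillarmou_Salo_Tzou_2019}; hence $h=0$ pointwise for every harmonic $v^{(3)}$. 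Taking $v^{(3)}=1$ gives $i\,d^*A=V$, leaving $\langle A, dv^{(3)}\rangle_g=0$ for all harmonic $v^{(3)}$; since on a K\"ahler manifold holomorphic and antiholomorphic functions are harmonic, assumption (b) supplies $f_1,\dots,f_n\in\mathcal{O}(M)$ with $df_j(p), d\overline{f_j}(p)$ spanning $T_p^*M\otimes\C$, so $A(p)=0$ at every interior point, and then $V=0$ — no stationary phase and no gauge discussion are needed. Finally, in the induction step the reduction to the $m=2$ computation is achieved simply by setting $v^{(1)}=\dots=v^{(m-2)}=1$ in the $m$th order identity, another concrete device absent from your sketch.
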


\begin{rem}
Theorem \ref{thm_main} in the case of a semilinear Schr\"odinger operator, i.e. when $A=0$, was obtained in  \cite{Ma_Tzou_2021}.
\end{rem}

\begin{rem}
The corresponding inverse problems for the linear Schr\"odinger operator $-\Delta_g+V_0$, $V_0\in C^\infty(M)$, as well as for the linear magnetic Schr\"odinger operator $d^*_{\overline{A_0}}d_{A_0}+V_0$, $A_0\in C^\infty(M,T^*M\otimes\C)$, in the geometric setting of Theorem \ref{thm_main}  are  open.  Theorem \ref{thm_main} can be viewed as a manifestation of the phenomenon, discovered in \cite{Kurylev_Lassas_Uhlmann_2018}, that the presence of nonlinearity may help to solve inverse problems. We refer to \cite{Guillarmou_Salo_Tzou_2019} for the solution of the linearized inverse problem for the linear Schr\"odinger operator in this geometric setting, and would like to emphasize that our proof of  Theorem \ref{thm_main} is based crucially on this result.  We also refer to \cite{Guillarmou_Tzou_2011_proc},  \cite{Guillarmou_Tzou_2011_partial}, \cite{Guillarmou_Tzou_2011} for solutions to inverse boundary problems for the linear Schr\"odinger and magnetic Schr\"odinger operators on Riemann surfaces.  
\end{rem}

\begin{rem}  
The known results for the inverse boundary problem for the linear Schr\"odinger and magnetic  Schr\"odinger operators on Riemannian manifolds of dimension $\ge 3$ with boundary beyond the Euclidean ones, see \cite{Sylvester_Uhlmann_1987}, \cite{Nakamura_Sun_Uhlmann}, \cite{Krup_Uhlmann_2014},  and real analytic ones, see \cite{Lassas_Uhlmann}, \cite{Lassas_Taylor_Uhlmann}, \cite{Lee_Uhlmann}, all require a certain conformal symmetry of the manifold as well as some additional assumptions about the injectivity of geodesic ray transforms, see \cite{DKSaloU_2009},  \cite{DKurylevLS_2016}, \cite{Cekic}, \cite{Krup_Uhlmann_magn_2018}. The known results for inverse problems for the nonlinear Schr\"odinger operators $L_{0, V}$  \cite{Feizmohammadi_Oksanen}, \cite{LLLS}, and nonlinear magnetic Schr\"odinger operators $L_{A,V}$  \cite{KU20}  still require the same conformal symmetry of the manifold, while the injectivity of the geodesic transform is no longer needed.  
\end{rem}

Note that the need to require a certain conformal symmetry of the manifold in all of the known results in dimensions $n\ge 3$ is to due to the existence of limiting Carleman weights on such manifolds, see  \cite{DKSaloU_2009}, which are crucial for the construction of complex geometric optics solutions used for solving inverse problems for elliptic PDE since the fundamental work \cite{Sylvester_Uhlmann_1987}.  However, it is shown in \cite{LS_2012}, \cite{Angulo_2017} that a generic manifold of dimension $n\ge 3$ does not admit limiting Carleman weights. 

\begin{rem}
As in \cite[Theorem 1.1]{Guillarmou_Salo_Tzou_2019}, manifolds considered in Theorem \ref{thm_main} need not admit limiting Carleman weights. For example, it was established in \cite{Angulo_Faraco_Guijarro_Ruiz_2016}  that  ${\C}P^2$ with the Fubini-Study metric $g$ does not admit a limiting Carleman weight near any point. However,   $({\C}P^2, g)$ 
 is a K\"ahler manifold, and as explained in \cite{Guillarmou_Salo_Tzou_2019}, compact $C^\infty$ subdomains of it provide examples of manifolds where Theorem \ref{thm_main} applies. 
\end{rem}

\begin{rem} 
In contrast to the inverse boundary problem for the linear magnetic Schr\"odinger equation, where one can determine the magnetic potential up to a gauge transformation only, see for example \cite{Nakamura_Sun_Uhlmann},
\cite{Krup_Uhlmann_2014}, in Theorem \ref{thm_main} the unique determination of both nonlinear magnetic and electric potentials is achieved.  This is due to our assumptions (ii) and (iv) which lead to the first order linearization of the nonlinear magnetic Schr\"odinger equation given by $-\Delta_g u=0$ rather than by the linear magnetic Schr\"odinger equation, see also \cite{KU20} for a similar unique determination in the case of conformally transversally anisotropic manifolds. 
\end{rem}

Let us finally mention that inverse problems for the semilinear Schr\"odinger operators and for nonlinear conductivity equations have been investigated intensively recently, see  for example  \cite{Feizmohammadi_Oksanen},  \cite{LLLS_partial}, \cite{LLLS}, \cite{LLinSTyni},  \cite{KU20_remark}, \cite{KU20_MRL}, and  \cite{CFKKU_2021}, \cite{KKU_conduc_partial}, \cite{CFa20}, \cite{CFb20}, \cite{MU20}, \cite{Sh19}, respectively.

Theorem \ref{thm_main} is a direct consequence of the main result of  \cite{Guillarmou_Salo_Tzou_2019}, combined with some boundary determination results of \cite{Ma_Tzou_2021} and of Appendix \ref{app_boundary_determination}, as well as the higher order linearization procedure introduced in \cite{Kurylev_Lassas_Uhlmann_2018} in the hyperbolic case, and in \cite{Feizmohammadi_Oksanen}, \cite{LLLS} in the elliptic case.  We refer to \cite{Isakov_93} where the method of a first order linearization was pioneered in the study of inverse problems for nonlinear PDE, and to \cite{Assylbekov_Zhou},   \cite{CNV_2019}, \cite{Sun_96}, and \cite{Sun_Uhlm_97} where a second order linearization was  successfully exploited. 
The crucial fact used in  the proof of the main result of  \cite{Guillarmou_Salo_Tzou_2019}, indispensable for our Theorem \ref{thm_main},  is that both holomorphic and antiholomorphic functions are harmonic on K\"ahler manifolds. The assumptions (a) and (b) in Theorem \ref{thm_main} are needed as they are used in  \cite{Guillarmou_Salo_Tzou_2019} to construct suitable  holomorphic and antiholomorphic functions by extending the two dimensional arguments of \cite{Bukhgeim_2008} and \cite{Guillarmou_Tzou_2011_partial} to the case of higher dimensional complex manifolds. 

The plan of the note is as follows.  The proof of Theorem \ref{thm_main} is given in Section \ref{sec_proof}. Appendix \ref{app_boundary_determination} contains the boundary determination result needed in the proof of Theorem \ref{thm_main}.

\section{Proof of Theorem \ref{thm_main}}
\label{sec_proof}

First using that $d_A^*=d^*-i\langle \overline{A}, \cdot\rangle_g$ and \eqref{eq_int_1_1_2}, 
we write the nonlinear magnetic Sch\"odinger operator $L_{A,V}$ as follows,
\begin{align*}
L_{A,V}u&=d^*_{\overline{A(\cdot, u)}}d_{A(\cdot, u)}u+V(\cdot, u)\\
&=-\Delta_gu+d^*(i A(\cdot, u)u)-i\langle A(\cdot, u), du\rangle_g+ \langle A(\cdot, u), A(\cdot, u)\rangle_g u+V(\cdot, u),
\end{align*} 
for $u\in C^\infty(M)$. Here  $\langle \cdot, \cdot\rangle_g$ is the pointwise scalar product in the space of $1$-forms induced by the Riemannian metric $g$, compatible with the K\"ahler structure.

Using the $m$th order linearization of the Dirichlet--to--Neumann map $\Lambda_{A,V}$  and induction on $m=2,3,\dots$, we shall show that the coefficients $A_{m-1}$ and $V_m$ in \eqref{eq_int_1_1} can all be recovered from $\Lambda_{A,V}$.

First, let $m=2$ and let us proceed to carry out a second order linearization of the Dirichlet--to--Neumann map. To that end,  let $f_1, f_2\in C^{\infty}(\p M)$ and let $u_j=u_j(x,\varepsilon)\in C^{2,\alpha}(M)$ be the unique small solution of the following Dirichlet problem,
\begin{equation}
\label{eq_2_1}
\begin{cases}
-\Delta_g u_j+id^*(\sum_{k=1}^\infty A^{(j)}_k(x)\frac{u_j^k}{k!} u_j)-i \langle \sum_{k=1}^\infty A^{(j)}_k(x)\frac{u_j^k}{k!}, du_j\rangle_g\\
 \, \quad   + \langle \sum_{k=1}^\infty A^{(j)}_k(x)\frac{u_j^k}{k!}, \sum_{k=1}^\infty A^{(j)}_k(x)\frac{u_j^k}{k!}\rangle_g u_j 
 +\sum_{k=2}^\infty V^{(j)}_k(x)\frac{u_j^k}{k!}=0 \text{ in }M^{\text{int}},\\
u_j=\varepsilon_1f_1+\varepsilon_2f_2  \text{ on }\p M,
\end{cases}
\end{equation}
for $j=1,2$. It was established in \cite[Appendix B]{KU20} that  for all $|\varepsilon|$ sufficiently small, the solution $u_j(\cdot,\varepsilon)$ depends holomorphically on $\varepsilon=(\varepsilon_1,\varepsilon_2)\in \text{neigh}(0, \C^2)$.  Applying the operator $\p_{\varepsilon_l}|_{\varepsilon=0}$, $l=1,2$, to \eqref{eq_2_1} and using that $u_j(x,0)=0$, we get  
\[
\begin{cases}
-\Delta_g v_j^{(l)}=0 & \text{in } M^{\text{int}},\\
v_j^{(l)}=f_l & \text{on }\p M,
\end{cases}
\]
where $v_j^{(l)}=\p_{\varepsilon_l}u_j|_{\varepsilon=0}$. By the uniqueness and the elliptic regularity, it follows that $v^{(l)}:=v_1^{(l)}=v_2^{(l)}\in C^{\infty} (M)$, $l=1,2$.  Applying $\p_{\varepsilon_1}\p_{\varepsilon_2}|_{\varepsilon=0}$ to \eqref{eq_2_1}, we obtain the second order linearization,
\begin{equation}
\label{eq_2_2}
\begin{cases}
-\Delta_g w_j +2i d^*(A_1^{(j)}v^{(1)}v^{2})- i \langle A_1^{(j)}, d(v^{(1)}v^{(2)})\rangle_g +V^{(j)}_2 v^{(1)}v^{(2)} =0  \text{ in } M^{\text{int}},\\
w_j=0  \text{ on }\p M,
\end{cases}
\end{equation}
where $w_j=\p_{\varepsilon_1}\p_{\varepsilon_2}u_j|_{\varepsilon=0}$, $j=1,2$. Using that 
\begin{equation}
\label{eq_2_2_1}
d^*(Bv)=(d^*B)v -\langle B, dv\rangle_g,
\end{equation}
for any $B\in C^\infty(M, T^*M\otimes \C)$ and $v\in C^\infty(M)$, \eqref{eq_2_2} implies that 
\begin{equation}
\label{eq_2_3}
\begin{cases}
-\Delta_g w_j - 3 i \langle A_1^{(j)}, d(v^{(1)}v^{(2)})\rangle_g +(2id^*(A_1^{(j)})+V^{(j)}_2) v^{(1)}v^{(2)} =0  \text{ in } M^{\text{int}},\\
w_j=0  \text{ on }\p M,
\end{cases}
\end{equation}
$j=1,2$. 
The equality $\Lambda_{A^{(1)}, V^{(1)}}(\varepsilon_1f_1+\varepsilon_2f_2)=\Lambda_{A^{(2)}, V^{(2)}}(\varepsilon_1f_1+\varepsilon_2f_2)$ yields that $\p_\nu u_1|_{\p M}= \p_\nu u_2|_{\p M}$, and  hence, $\p_\nu w_1|_{\p M}= \p_\nu w_2|_{\p M}$.  Multiplying the difference of two equations in \eqref{eq_2_3} by a harmonic function $v^{(3)}\in C^\infty(M)$, integrating over $M$ and using Green's formula, we obtain that  
\begin{equation}
\label{eq_2_4}
\int_M \big(3i \langle A, d(v^{(1)}v^{(2)})\rangle_g v^{(3)}-(2i d^*(A) +V) v^{(1)}v^{(2)}v^{(3)}  \big)dV_g=0,
\end{equation}
valid for all harmonic functions $v^{(l)}\in C^{\infty}(M)$, $l=1,2,3$. Here $A=A_1^{(1)}-A_1^{(2)}$ and $V=V_2^{(1)}-V_2^{(2)}$.  Interchanging $v^{(3)}$ and $v^{(1)}$ in \eqref{eq_2_4}, we also have 
\begin{equation}
\label{eq_2_5}
\int_M \big(3i \langle A, d(v^{(3)}v^{(2)})\rangle_g v^{(1)}-(2i d^*(A) +V) v^{(1)}v^{(2)}v^{(3)}  \big)dV_g=0. 
\end{equation}
Subtracting \eqref{eq_2_5} from \eqref{eq_2_4} and letting $v^{(3)}=1$, we get 
\begin{equation}
\label{eq_2_6}
\int_M  \langle A, dv^{(1)}\rangle_g v^{(2)}dV_g=0, 
\end{equation}
for all harmonic functions $v^{(1)}, v^{(2)}\in C^{\infty}(M)$. Applying Proposition \ref{prop_boundary_determination} to \eqref{eq_2_6}, we conclude that $A|_{\p M}=0$. Using this together with Stokes' formula,
\[
\int_M  \langle dw, \eta\rangle_g dV_g=\int_M w d^*\eta dV_g+\int_{\p M} \omega (\textbf{n}\eta),\ \omega\in C^\infty(M), \ \eta\in C^\infty(M,T^*M\otimes \C),
\]
where the $(2n-1)$-form $\textbf{n}\eta$ on the boundary  is the normal trace of $\eta$, see \cite[Proposition 2.1.2]{Schwarz_1995}, we obtain from  \eqref{eq_2_4} that 
\begin{equation}
\label{eq_2_7}
\int_M \big(3i d^*( A v^{(3)}) -(2i d^*(A) +V) v^{(3)}\big) v^{(1)}v^{(2)}dV_g=0,
\end{equation}
for all harmonic functions $v^{(l)}\in C^{\infty}(M)$, $l=1,2,3$.  Applying \cite[Theorem 1.1]{Guillarmou_Salo_Tzou_2019} 
together with the boundary determination result of \cite[Proposition 3.1]{Ma_Tzou_2021} to \eqref{eq_2_7}, we get 
\begin{equation}
\label{eq_2_8}
3i d^*( A v^{(3)}) -(2i d^*(A) +V) v^{(3)}=0,
\end{equation}
for every harmonic function $v^{(3)}\in C^{\infty}(M)$.  Using \eqref{eq_2_2_1}, we obtain from \eqref{eq_2_8} that 
\begin{equation}
\label{eq_2_9}
(i d^*(A) -V) v^{(3)}-3i \langle A, d v^{(3)}\rangle_g=0,
\end{equation}
for every harmonic function $v^{(3)}\in C^{\infty}(M)$. Letting $v^{(3)}=1$ in \eqref{eq_2_9}, we get 
\begin{equation}
\label{eq_2_10}
i d^*(A) -V=0,
\end{equation}
and therefore, 
\begin{equation}
\label{eq_2_11}
 \langle A, d v^{(3)}\rangle_g=0,
\end{equation}
for every harmonic function $v^{(3)}\in C^{\infty}(M)$.  Let $p\in M^{\text{int}}$ and by assumption (b), there exist $f_1,\dots, f_n\in \mathcal{O}(M)$ which form a complex coordinate system near $p$. Hence, $df_j(p)$, $d\overline{f_j}(p)$ is a basis for $T^*_pM\otimes \C$. Since on a K\"ahler manifold the Laplacian on functions satisfies 
\[
\Delta_g=d^*d=2\p^*\p=2\overline{\p }^*\overline{\p},
\]
see \cite[Lemma 2.1]{Guillarmou_Salo_Tzou_2019}, \cite[Theorem 8.6, p. 45]{Moroianu_2007}, we have that all functions $f_1,\dots, f_n$ as well as $\overline{f_1},\dots, \overline{f_n}$ are harmonic, and therefore, it follows from \eqref{eq_2_11} that 
\[
\langle A, d f_j\rangle_g(p)=0, \quad \langle A, d \overline{f_j}\rangle_g(p)=0.
\]
Hence, $A=0$,  and therefore, $A_1^{(1)}=A_1^{(2)}$ in $M$. It follows from \eqref{eq_2_10} that $V=0$, and therefore, $V^{(1)}_2=V^{(2)}_2$ in $M$. 

Let $m\ge 3$ and let us assume that 
\begin{equation}
\label{eq_2_11_1}
A_k^{(1)}=A_k^{(2)}, \quad k=1,\dots, m-2, \quad V^{(1)}_k=V^{(2)}_k,  \quad k=2,\dots, m-1. 
\end{equation}
To prove that $A_{m-1}^{(1)}=A_{m-1}^{(2)}$ and $V^{(1)}_m=V^{(2)}_m$, we shall use the $m$th order linearization of the Dirichlet--to--Neumann map. Such an $m$th order linearization with $m\ge 3$ is performed in \cite{KU20}, and combining  with \eqref{eq_2_11_1}, it  leads to the following integral identity,  
\begin{equation}
\label{eq_2_12}
\int_M \big((m+1)i \langle A, d(v^{(1)}\cdots v^{(m)})\rangle_g v^{(m+1)}-(mi d^*(A) +V) v^{(1)}\cdots v^{(m+1)} \big)dV_g=0,
\end{equation}
for all harmonic functions $v^{(l)}\in C^{\infty }(M)$, $l=1,\dots, m+1$, see \cite[Section 5]{KU20}. Here $A=A_{m-1}^{(1)}-A_{m-1}^{(2)}$ and $V=V_{m}^{(1)}-V_m^{(2)}$. 
Letting $v^{(1)}=\dots=v^{(m-2)}=1$ in \eqref{eq_2_12} and arguing as in the case $m=2$, we complete the proof of Theorem \ref{thm_main}.

\begin{rem}
Thanks to the density of products of two harmonic functions in the geometric setting of Theorem \ref{thm_main} established in \cite{Guillarmou_Salo_Tzou_2019}, we recover the nonlinear magnetic and electric potentials of the general form \eqref{eq_int_1_1} here. On the other hand,  in the case of conformally transversally anisotropic manifolds of real dimension $\ge 3$,  only the density of products of four harmonic functions is available, see \cite{Feizmohammadi_Oksanen}, \cite{LLLS}, \cite{KU20}, and therefore, the nonlinear magnetic and electric potentials of the form \eqref{eq_int_1_1} with $k\ge 2$ and $k\ge 3$, respectively,  were determined from the knowledge of the Dirichlet--to--Neumann map in \cite{KU20}.  
\end{rem}

\begin{appendix}
\section{Boundary determination of a $1$-form on a Riemannian manifold}
\label{app_boundary_determination}

When proving Theorem \ref{thm_main}, we need the following essentially known boundary determination result on a general compact Riemannian manifold with boundary, see \cite{Brown_Salo_2006}, \cite[Appendix A]{Krup_Uhlmann_magn_2018_advection}, \cite[Appendix C]{KU20}, \cite{Ma} for similar results. We present a proof for completeness and convenience of the reader. 
\begin{prop}
\label{prop_boundary_determination}
Let $(M,g)$ be a compact smooth Riemannian manifold of dimension $n\ge 2$ with smooth boundary. If $A\in C(M,T^*M\otimes \C)$ satisfies 
\begin{equation}
\label{eq_app_1}
\int_M\langle A, du\rangle_g \overline{u} dV_g=0,
\end{equation}
for every harmonic function $u\in C^\infty(M)$, then $A|_{\p M}=0$. 
\end{prop}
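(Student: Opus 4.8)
The plan is to localize the problem near an arbitrary boundary point $p\in\partial M$ using boundary normal coordinates and to probe the identity \eqref{eq_app_1} with a family of harmonic functions that concentrate at $p$. First I would fix $p\in\partial M$ and choose boundary normal (semigeodesic) coordinates $(x',x_n)$ centered at $p$, so that $g$ takes the form $dx_n^2+g_{\alpha\beta}(x',x_n)dx^\alpha dx^\beta$ with $\partial M=\{x_n=0\}$ locally and $M^{\mathrm{int}}=\{x_n>0\}$. In these coordinates $A=A_n\,dx_n+\sum_\alpha A_\alpha\,dx^\alpha$, and the goal is to show each component vanishes at $x'=0$, $x_n=0$. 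The standard device, going back to Brown and to \cite{Brown_Salo_2006}, is to use WKB-type approximate harmonic functions of the form
\[
u_\tau(x)=\eta(x)\, e^{-\tau(x_n - i x\cdot\omega)}\,(\text{amplitude}) ,
\]
with a real unit covector direction, a large parameter $\tau>0$, and a cutoff $\eta$ supported near $p$; one then corrects $u_\tau$ by solving $-\Delta_g r_\tau=\Delta_g(\text{quasimode})$ with zero boundary data, with the correction negligible in the relevant $\tau$-weighted norms. Actually, since we only need genuine harmonic functions, I would take $u_\tau$ to be the harmonic extension of the boundary data of the quasimode, or equivalently add the elliptic correction; either way the effective test functions behave like Gaussian-type bumps of width $\tau^{-1/2}$ in $x'$ and width $\tau^{-1}$ in $x_n$.

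The key steps, in order, are: (1) construct the concentrating harmonic family $u_\tau^{(\omega)}$ depending on a tangential direction $\omega$, with the normalization $\|u_\tau\|$ chosen so that the leading contribution to $\int_M\langle A,du_\tau\rangle_g\overline{u_\tau}\,dV_g$ is $O(1)$; (2) compute the large-$\tau$ asymptotics of that integral, observing that $du_\tau\sim \tau u_\tau\,(-dx_n+i\,d(x'\cdot\omega))+\text{l.o.t.}$, so that $\langle A,du_\tau\rangle_g\overline{u_\tau}\sim \tau|u_\tau|^2\langle A(p), -dx_n+i\,\omega^\flat\rangle_g$ to leading order after the mass of $|u_\tau|^2$ concentrates at $p$; (3) pass to the limit using that $\tau\int_M|u_\tau|^2\,dV_g$ tends to a positive constant, concluding from \eqref{eq_app_1} that $\langle A(p),-dx_n+i\,\omega^\flat\rangle_g=0$ for every tangential direction $\omega$ and (by also allowing $\omega=0$, or by taking real and imaginary parts) that $A_n(p)=0$ and $A_\alpha(p)=0$ for all $\alpha$; (4) since $p\in\partial M$ was arbitrary, conclude $A|_{\partial M}=0$. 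One has to be slightly careful that $A$ is only assumed continuous, not smooth, so the WKB amplitude should not involve transport equations against $A$; instead one treats $A$ as a continuous coefficient and uses the concentration of $|u_\tau|^2\,dV_g$ (an approximate identity at $p$) to replace $A(x)$ by $A(p)$ up to $o(1)$ via uniform continuity.

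The main obstacle I anticipate is bookkeeping the weighted estimates for the elliptic remainder: one must show that the correction $r_\tau$ solving the Dirichlet problem with the quasimode's residual as source satisfies $\|r_\tau\|_{H^1}=o(\tau^{-1/2})$ or the analogous bound ensuring that the cross terms $\tau\int\langle A,dr_\tau\rangle\overline{u_\tau}$, $\tau\int\langle A,du_\tau\rangle\overline{r_\tau}$, and $\tau\int\langle A,dr_\tau\rangle\overline{r_\tau}$ all vanish in the limit; this requires the standard Carleman estimate or boundary Carleman estimate with weight $x_n$ near $\partial M$, or alternatively an explicit Poisson-kernel construction of harmonic functions with the prescribed concentrating boundary trace. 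A secondary technical point is handling the fact that the quasimode's boundary trace is only supported near $p$ but is not exactly harmonic-extendable without the correction; choosing the boundary data itself to be $\eta(x')e^{i\tau x'\cdot\omega}e^{-\sqrt{\tau}|x'|^2}$-type and letting the harmonic extension into $x_n>0$ develop the expected $e^{-\tau x_n}$ decay sidesteps part of this. Modulo these estimates, which are by now routine (cf.\ \cite{Brown_Salo_2006}, \cite[Appendix A]{Krup_Uhlmann_magn_2018_advection}, \cite[Appendix C]{KU20}), the argument reduces to the elementary stationary-phase/concentration computation in steps (2)--(3).
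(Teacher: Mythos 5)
Your proposal follows essentially the same route as the paper's proof: Brown--Salo type oscillatory functions concentrating at a boundary point, corrected to be exactly harmonic by solving a Dirichlet problem for the remainder, with the weighted estimates for the correction and cross terms as in \cite{Brown_Salo_2006}, \cite{Feizm_K_Oksan_Uhl}, \cite[Appendix C]{KU20}, and the limit producing $\langle A(p), -dx_n+i\,\omega^\flat\rangle_g=0$ for every unit tangential $\omega$. One small caveat in your step (3): you cannot take $\omega=0$ (the profile $e^{-\tau x_n}$ is not approximately harmonic), and since $A$ is complex-valued you cannot separate real and imaginary parts directly; instead replace $\omega$ by $-\omega$ and combine the two resulting identities to conclude $A_n(p)=0$ and $\langle A(p),\omega^\flat\rangle_g=0$ for all $\omega$, which is how the paper (via \cite[Appendix C]{KU20}) deduces $A|_{\partial M}=0$.
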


\begin{proof}
 In order to show that $A|_{\p M}=0$, we shall construct a suitable harmonic function $u\in C^\infty(M)$ to be used in the integral identity \eqref{eq_app_1}. When doing so, we shall use an explicit  family of functions $v_\lambda$, constructed  in  \cite{Brown_2001}, \cite{Brown_Salo_2006}, whose boundary values have a highly oscillatory behavior as $\lambda\to 0$, while becoming increasingly concentrated near a given point on the boundary of $M$.  We let $x_0\in \p M$ and we shall work in the boundary normal coordinates centered at $x_0$ so that in these coordinates, $x_0 =0$, the boundary $\p M$ is given by $\{x_n=0\}$, and $M^{\text{int}}$ is given by $\{x_n > 0\}$.   We have $T_{x_0}\p M=\R^{n-1}$, equipped with the Euclidean metric.  The unit tangent vector $\tau$ is then given by $\tau=(\tau',0)$ where $\tau'\in \R^{n-1}$, $|\tau'|=1$.   Associated to the tangent vector $\tau'$ is the covector $\sum_{\beta=1}^{n-1} g_{\alpha \beta}(0) \tau'_\beta=\tau'_\alpha\in T^*_{x_0}\p M$. 

Letting  $\frac{1}{3}\le \alpha\le \frac{1}{2}$ and following \cite{Brown_Salo_2006}, see also \cite[Appendix A]{Feizm_K_Oksan_Uhl},   we set 
\[
v_\lambda(x)= \lambda^{-\frac{\alpha(n-1)}{2}-\frac{1}{2}}\eta\bigg(\frac{x}{\lambda^{\alpha}}\bigg)e^{\frac{i}{\lambda}(\tau'\cdot x'+ ix_n)}, \quad 0<\lambda\ll 1,
\]
where $\eta\in C^\infty_0(\R^n;\R)$ is such that $\supp(\eta)$ is in a small neighborhood of $0$, and 
\[
\int_{\R^{n-1}}\eta(x',0)^2dx'=1.
\]
Here $\tau'$ is viewed as a covector. Thus, we have $v_\lambda\in C^\infty(M)$  with $\supp(v_\lambda)$ in  $\mathcal{O}(\lambda^{\alpha})$ neighborhood of $x_0=0$. A direct computation shows that 
\begin{equation}
\label{eq_app_4}
\|v_\lambda\|_{L^2(M)}=\mathcal{O}(1),  
\end{equation}
as $\lambda\to 0$, see also \cite[Appendix A, (A.8)]{Feizm_K_Oksan_Uhl}.  Furthermore, we have
\begin{equation}
\label{eq_app_11}
\|dv_\lambda\|_{L^2(M)}=\mathcal{O}(\lambda^{-1}),
\end{equation}
as $\lambda\to 0$,  see  \cite[Appendix C, bound (C.42)]{KU20}.

Following \cite{Brown_Salo_2006}, we set 
\begin{equation}
\label{eq_app_5}
u=v_\lambda+r,
\end{equation} 
where $r\in H^1_0(M^{\text{int}})$ is the unique solution to the Dirichlet problem, 
\begin{equation}
\label{eq_app_6}
\begin{cases}
-\Delta_g r=\Delta_g v_\lambda & \text{in}\quad M^{\text{int}},\\
r|_{\p M}=0.
\end{cases}
\end{equation} 
Boundary elliptic regularity implies $r\in C^\infty(M)$, and hence, $u\in C^\infty(M)$. Following \cite[Appendix A]{Feizm_K_Oksan_Uhl}, we fix $\alpha=1/3$. The following bound, proved in \cite[Appendix A, bound (A.15)]{Feizm_K_Oksan_Uhl}, will be needed here, 
\begin{equation}
\label{eq_app_7}
\|r\|_{L^2(M)}=\mathcal{O}(\lambda^{1/12}),
\end{equation}
as $\lambda\to 0$. The proof of \eqref{eq_app_7} relies on elliptic estimates for the Dirichlet problem for the Laplacian in Sobolev spaces of low regularity. We shall also need the following rough bound
\begin{equation}
\label{eq_app_10}
\|r\|_{H^1(M^{\text{int}})}=\mathcal{O}(\lambda^{-1/3}),
\end{equation}
as $\lambda\to 0$,
established in \cite[Appendix C, bound (C.41)]{KU20}.

Substituting $u$ into \eqref{eq_app_1}, and multiplying \eqref{eq_app_1} by $\lambda$, we get
\begin{equation}
\label{eq_app_12}
0= \lambda\int_{M} \langle A, dv_\lambda+dr\rangle_g(\overline{v_\lambda}+\overline{r})dV_g= \lambda( I_1+I_2+I_3),
\end{equation}
where 
\begin{align*}
I_1=\int_{M} \langle A, dv_\lambda\rangle_g \overline{v_\lambda}dV_g, \quad I_2=\int_{M} \langle A, dr \rangle_g(\overline{v_\lambda}+\overline{r})dV_g,\quad I_3= \int_{M} \langle A, dv_\lambda\rangle_g \overline{r}dV_g.
\end{align*}
It was computed in \cite[Appendix C]{KU20}, see bounds (C.44) and (C.45) there,  that 
\begin{equation}
\label{eq_app_13}
\lim_{\lambda\to 0}\lambda I_1=\frac{i}{2}\langle A(0),(\tau',i) \rangle.
\end{equation}
It follows from \eqref{eq_app_10}, \eqref{eq_app_4}, and \eqref{eq_app_7} that 
\begin{equation}
\label{eq_app_14}
\lambda |I_2|\le \mathcal{O}(\lambda)\|d r\|_{L^2(M)}\|v_\lambda+r\|_{L^2(M)}=\mathcal{O}(\lambda^{2/3}). 
\end{equation}
Using \eqref{eq_app_11} and \eqref{eq_app_7}, we get 
\begin{equation}
\label{eq_app_15}
\lambda |I_3|\le \mathcal{O}(\lambda)\|d v_\lambda\|_{L^2(M)}\|r\|_{L^2(M)}=\mathcal{O}(\lambda^{1/12}). 
\end{equation}
Passing to the limit $\lambda\to 0$ in \eqref{eq_app_12} and using \eqref{eq_app_13}, \eqref{eq_app_14}, \eqref{eq_app_15}, we obtain that $\langle A(0),(\tau',i) \rangle=0$, and arguing as in \cite[Appendix C]{KU20}, we get $A|_{\p M}=0$. This completes the proof of Proposition \ref{prop_boundary_determination}. 
 \end{proof}

\end{appendix}

\section*{Acknowledgements}

The research of K.K. is partially supported by the National Science Foundation (DMS 2109199). The research of G.U. is partially supported by NSF, a Walker Professorship at UW and a Si-Yuan Professorship at IAS, HKUST. The research of L.Y. is partially supported by the National Science Foundation (DMS 2109199).


\begin{thebibliography}{99}


\bibitem{Angulo_2017}
Angulo-Ardoy, P., \emph{On the set of metrics without local limiting Carleman weights}, 
Inverse Probl. Imaging \textbf{11} (2017), no. 1, 47--64. 

\bibitem{Angulo_Faraco_Guijarro_Ruiz_2016}
Angulo-Ardoy, P.,  Faraco, D.,  Guijarro, L.,  Ruiz, A., \emph{Obstructions to the existence of limiting Carleman weights},  Anal. PDE \textbf{9} (2016), no. 3, 575--595.

\bibitem{Assylbekov_Zhou}
Assylbekov, Y., Zhou, T., \emph{Direct and inverse problems for the nonlinear time-harmonic Maxwell equations in Kerr-type media}, J. Spectr. Theory \textbf{11} (2021), no. 1, 1--38.

\bibitem{Brown_2001}
Brown,  R., \emph{Recovering the conductivity at boundary from Dirichlet to Neumann map: a pointwise result}, J. Inverse Ill-Posed Probl.  \textbf{9} (2001), 567--574.

\bibitem{Brown_Salo_2006}
Brown, R.,  Salo, M., \emph{Identifiability at the boundary for first-order terms}, 
Appl. Anal. \textbf{85} (2006), no. 6-7, 735--749. 

\bibitem{Bukhgeim_2008}
Bukhgeim, A., \emph{Recovering a potential from Cauchy data in the two- dimensional case}, J. Inverse Ill-posed Probl. \textbf{16} (2008), 19--34.

\bibitem{CFa20}
C\^{a}rstea,  C., Feizmohammadi, A., \emph{An inverse boundary value problem for certain anisotropic quasilinear elliptic equations}, J. Differential Equations, \textbf{284} (2021), 318--349.


\bibitem{CFb20}
C\^{a}rstea, C.,   Feizmohammadi, A., \emph{A density property for tensor products of gradients of harmonic functions and applications}, preprint 2020, \textsf{https://arxiv.org/abs/2009.11217}. 

\bibitem{CFKKU_2021}
C\^{a}rstea, C.,  Feizmohammadi, A., Kian, Y.,  Krupchyk, K., Uhlmann, G., \emph{The Calder\'on inverse problem for isotropic quasilinear conductivities}, Adv. Math. \textbf{391} (2021), Paper No. 107956.




\bibitem{CNV_2019}
C\^{a}rstea, C., Nakamura, G., Vashisth, M., \emph{Reconstruction for the coefficients of a quasilinear elliptic partial differential equation}, Appl. Math. Lett. \textbf{98} (2019), 121--127.



\bibitem{Cekic}
Ceki\'c, M., \emph{The Calder\'on problem for connections},
Comm. Partial Differential Equations \textbf{42} (2017), no. 11, 1781--1836.



\bibitem{DKSaloU_2009} 
Dos Santos Ferreira, D.,  Kenig, C., Salo, M., Uhlmann, G.,  \emph{Limiting Carleman weights and anisotropic inverse problems},  Invent. Math. \textbf{178} (2009), no. 1, 119--171.


\bibitem{DKurylevLS_2016}
Dos Santos Ferreira, D., Kurylev, Y., Lassas, M., Salo, M., \emph{The Calder\'on problem in transversally anisotropic geometries},  J. Eur. Math. Soc. (JEMS) \textbf{18} (2016), no. 11, 2579--2626. 

\bibitem{Feizm_K_Oksan_Uhl}
Feizmohammadi, A., Krupchyk, K., Oksanen, L., Uhlmann, G., \emph{Reconstruction in the Calder\'on problem on conformally transversally anisotropic manifolds},  J. Funct. Anal. \textbf{281} (2021), no. 9, Paper No. 109191.

\bibitem{Feizmohammadi_Oksanen}
Feizmohammadi, A.,  Oksanen, L., \emph{An inverse problem for a semi-linear elliptic equation in Riemannian geometries}, J. Differential Equations \textbf{269} (2020), no. 6, 4683--4719.


\bibitem{Guillarmou_Tzou_2011_partial}
Guillarmou, C., Tzou, L., \emph{Calder\'on inverse problem with partial data on Riemann surfaces},  Duke Math. J. 158 (2011), no. 1, 83--120. 

\bibitem{Guillarmou_Tzou_2011}
Guillarmou, C., Tzou, L., \emph{Identification of a connection from Cauchy data on a Riemann surface with boundary}, Geom. Funct. Anal. \textbf{21} (2011), no. 2, 393--418.

\bibitem{Guillarmou_Tzou_2011_proc}
Guillarmou, C., Tzou, L., \emph{Calder\'on inverse problem for the Schr\"odinger operator on Riemann surfaces}, Proc. Centre Math. Appl. Austral. Nat. Univ., 44, Austral. Nat. Univ., Canberra, 2010, 129--141.



\bibitem{Guillarmou_Salo_Tzou_2019}
Guillarmou, C.,  Salo, M., Tzou, L., \emph{The linearized Calder\'on problem on complex manifolds}, 
Acta Math. Sin. (Engl. Ser.) \textbf{35} (2019), no. 6, 1043--1056. 


\bibitem{Isakov_93}
Isakov, V., \emph{On uniqueness in inverse problems for semilinear parabolic equations},  Arch. Rational Mech. Anal. \textbf{124} (1993), no. 1, 1--12.


\bibitem{KKU_conduc_partial}
Kian, Y., Krupchyk, K.,  Uhlmann, G.,  \emph{Partial data inverse problems for quasilinear conductivity equations}, preprint 2020,  \textsf{https://arxiv.org/abs/2010.11409}.

\bibitem{Krup_Uhlmann_2014}
Krupchyk, K., Uhlmann, G., \emph{Uniqueness in an inverse boundary problem for a magnetic Schr\"odinger operator with a bounded magnetic potential}, Comm. Math. Phys. \textbf{327} (2014), no. 3, 993--1009.

\bibitem{Krup_Uhlmann_magn_2018}
Krupchyk, K., Uhlmann, G., \emph{Inverse problems for magnetic Schr\"odinger operators in transversally anisotropic geometries}, Comm. Math. Phys. \textbf{361} (2018), no. 2, 525--582.

\bibitem{Krup_Uhlmann_magn_2018_advection}
Krupchyk, K., Uhlmann, G., \emph{Inverse problems for advection diffusion equations in admissible geometries}, Comm. Partial Differential Equations 43 (2018), no. 4, 585--615.

\bibitem{KU20_remark}
Krupchyk, K., Uhlmann, G., \emph{A remark on partial data inverse problems for semilinear elliptic equations}, Proc. Amer. Math. Soc. \textbf{148} (2020), no. 2, 681--685. 


\bibitem{KU20_MRL}
Krupchyk, K., Uhlmann, G., \emph{Partial data inverse problems for semilinear elliptic equations with gradient nonlinearities}, Math. Res. Lett. \textbf{27} (2020), no. 6, 1801--1824. 




\bibitem{KU20}
Krupchyk, K., Uhlmann, G., \emph{Inverse problems for nonlinear magnetic Schr\"odinger equations on conformally transversally anisotropic manifolds}, preprint 2020, \textsf{https://arxiv.org/abs/2009.05089}. 



\bibitem{Kurylev_Lassas_Uhlmann_2018}
Kurylev, Y., Lassas, M., Uhlmann, G., \emph{Inverse problems for Lorentzian manifolds and non-linear hyperbolic equations}, Invent. Math. \textbf{212} (2018), no. 3, 781--85.

\bibitem{Lai_Ting}
Lai, R.-Y.,  Zhou, T., \emph{Partial Data Inverse Problems for Nonlinear Magnetic Schr\"odinger Equations}, preprint 2020, \textsf{https://arxiv.org/abs/2007.02475}.


\bibitem{LLLS_partial}
Lassas, M., Liimatainen, T.,   Lin, Y-H.,  Salo, M., \emph{Partial data inverse problems and simultaneous recovery of boundary and coefficients for semilinear elliptic equations}, 
Rev. Mat. Iberoamericana \textbf{37} (2021), no. 4, 1553--1580.


\bibitem{LLLS}
Lassas, M., Liimatainen, T.,   Lin, Y-H.,  Salo, M., \emph{Inverse problems for elliptic equations with power type nonlinearities}, J. Math. Pures Appl. \textbf{145} (2021), 44--82.

\bibitem{Lassas_Uhlmann}
Lassas, M.,  Uhlmann, G., \emph{On determining a Riemannian manifold from the Dirichlet-to-Neumann map},  Ann. Sci. \'Ecole Norm. Sup. (4) \textbf{34} (2001), no. 5, 771--787.

\bibitem{Lassas_Taylor_Uhlmann}
Lassas, M., Taylor, M., Uhlmann, G., \emph{The Dirichlet--to--Neumann map for complete Riemannian manifolds with boundary},  Comm. Anal. Geom. 11 (2003), no. 2, 207--221. 


\bibitem{Lee_Uhlmann} 
 Lee, J.,  Uhlmann, G., \emph{Determining anisotropic real-analytic conductivities by boundary measurements}, 
Comm. Pure Appl. Math. \textbf{42} (1989), no. 8, 1097--1112. 

\bibitem{LLinSTyni}
Liimatainen, T.,   Lin, Y-H.,  Salo, M., Tyni, T., \emph{Inverse problems for elliptic equations with fractional power type nonlinearities}, J. Differential Equations, to appear. 

\bibitem{LS_2012}  
Liimatainen, T.,  Salo, M., \emph{Nowhere conformally homogeneous manifolds and limiting Carleman weights}, Inverse Probl. Imaging \textbf{6} (2012), no. 3, 523--530. 


\bibitem{Ma}
Ma, Y., \emph{A note on the partial data inverse problems for a nonlinear magnetic Schr\"odinger operator on Riemann surface}, preprint 2020, \textsf{https://arxiv.org/abs/2010.14180}. 

\bibitem{Ma_Tzou_2021}
Ma, Y., Tzou, L., \emph{Semilinear Calder\'on problem on Stein manifolds with K\"ahler metric},  Bull. Aust. Math. Soc. \textbf{103} (2021), no. 1, 132--144.

\bibitem{Moroianu_2007}
Moroianu,  A., \emph{Lectures on K\"ahler geometry},  London Mathematical Society Student Texts \textbf{69}, Cambridge University Press, 2007.

\bibitem{MU20}
Mu\~{n}oz, C.,  Uhlmann, G., \emph{The Calder\'{o}n problem for quasilinear elliptic equations},
 Ann. Inst. H. Poincar\'{e} Anal. Non Lin\'{e}aire, \textbf{37} (2020), no. 5, 1143--1166.



\bibitem{Nakamura_Sun_Uhlmann}
Nakamura, G.,  Sun, Z., Uhlmann, G., \emph{Global identifiability for an inverse problem for the Schr\"odinger equation in a magnetic field},  Math. Ann. \textbf{303} (1995), no. 3, 377--388.




\bibitem{Schwarz_1995}
Schwarz, G., \emph{Hodge decomposition--a method for solving boundary value problems}, Lecture Notes in Mathematics, 1607. Springer-Verlag, Berlin, 1995. 

\bibitem{Sh19}
Shankar, R., \emph{Recovering a quasilinear conductivity from boundary measurements},
 Inverse problems, \textbf{37} (2019), 015014.


\bibitem{Sun_96}
Sun, Z., \emph{On a quasilinear inverse boundary value problem},  Math. Z. \textbf{221} (1996), no. 2, 293--305.


\bibitem{Sun_Uhlm_97}
Sun, Z.,  Uhlmann, G., \emph{Inverse problems in quasilinear anisotropic media},
Amer. J. Math. \textbf{119} (1997), no. 4, 771--797.

\bibitem{Sylvester_Uhlmann_1987}
Sylvester, J., Uhlmann, G., \emph{A global uniqueness theorem for an inverse boundary value problem}, Ann. of Math. (2)
\textbf{125} (1987), no. 1, 153--169.


\end{thebibliography}
\end{document}